\newtheorem{thm}{Theorem}
\newtheorem{lemma}[thm]{Lemma}
\newtheorem{cor}[thm]{Corollary}
\newtheorem{rem}[thm]{Remark}
\newtheorem*{distinct_spheres_lemma}{The Distinct Spheres Lemma}
\newtheorem*{InfiniteMotionConjecture}{The Infinite Motion Conjecture}
\newcounter{temp}
\newenvironment{citethm}[1]
{
 \addtocounter{temp}{1}
 \newtheorem*{\alph{temp}theorem}{Theorem #1}
 \begin{\alph{temp}theorem}
}
{\end{\alph{temp}theorem}}
\newenvironment{citecor}[1]
{
 \addtocounter{temp}{1}
 \newtheorem*{\alph{temp}theorem}{Corollary #1}
 \begin{\alph{temp}theorem}
}
{\end{\alph{temp}theorem}}
\providecommand{\aut}{\mathop{\rm Aut \,}\nolimits}
\providecommand{\sym}{\mathop{\rm Sym \,}\nolimits}
\providecommand{\symdiff}{\mathop{\Delta}}
\newcommand{\N}{\mathbb{N}}
\providecommand{\cart}{\mathop{\Box}}
\renewcommand{\\}{\vspace{3mm}}
\title{\bf Distinguishability of infinite groups and graphs}
\author{Simon M. Smith}
\address{Simon M. Smith, Department of Mathematics, Syracuse University \\
   Syracuse, New York, U.S.A}
\email{simon.smith@chch.oxon.org, smsmit13@syr.edu}
\author{Thomas W. Tucker}
\address{Thomas W. Tucker, Department of Mathematics, Colgate University \\
   Hamilton, New York, U.S.A.}
\email{ttucker@colgate.edu}
\author{Mark E. Watkins}
\address{Mark E. Watkins, Department of Mathematics, Syracuse University \\
   Syracuse, New York, U.S.A}
\email{mewatkin@syr.edu}
\date{\today}
\begin{document}
\maketitle

\begin{abstract} 
The {\em distinguishing number} of a group $G$ acting faithfully on a set $V$ is the least number of colors needed to color the elements of $V$ so that no non-identity element of the group preserves the coloring. The {\em distinguishing number} of a graph is the distinguishing number of its full automorphism group acting on its vertex set. A connected graph $\Gamma$ is said to have {\em connectivity 1 } if there exists a vertex $\alpha \in V\Gamma$ such that $\Gamma \setminus \{\alpha\}$ is not connected. For $\alpha \in V$, an orbit of the point stabilizer $G_\alpha$ is called a {\em suborbit} of $G$.

We prove that every connected primitive graph with infinite diameter and countably many vertices has distinguishing number $2$. Consequently, any infinite, connected, primitive, locally finite graph is $2$-distinguishable; so, too, is any infinite primitive group with finite suborbits. We also show that all denumerable vertex-transitive  graphs of connectivity 1 and all Cartesian products of connected denumerable graphs of infinite diameter have distinguishing number $2$.
All of our results follow directly from a versatile lemma which we call The Distinct Spheres Lemma.
 
\end{abstract}

%
%
\section{Introduction}

The {\em distinguishing number} of a group of permutations $G$ of a set $V$ is the least number of colors needed to color the elements of $V$ so that no non-identity element of $G$ fixes every color class setwise. In particular, $G$ has distinguishing number $2$ if and only if there exists a subset $Y \subseteq V$ whose setwise stabilizer $G_{\{Y\}}$ is trivial.  The group $G \leq \sym(V)$ is said to be {\em $n$-distinguishable} if its distinguishing number is at most $n$.   A graph $\Gamma$ is said to be $n$-{\em distinguishable} if its full automorphism group $\aut(G)$ is $n$-distinguishable when acting on $V\Gamma$.

Recall that a group $G \leq \sym(V)$ is primitive if $G$ acts transitively on $V$ and the only $G$-invariant equivalence relations on $V$ are the trivial relation and the universal relation. A graph $\Gamma$ is {\em primitive} if its full automorphism group acts primitively on its vertex set. Primitive graphs with at least one edge are connected.  The graph $\Gamma$ is {\em denumerable} if its vertex set is denumerable (i.e., countably infinite). 

\\

The concept of distinguishing numbers was introduced in 1996 by Albertson and Collins \cite{AC1} in the context of finite graphs  and later generalized to general group actions. Since then, there have been myriad papers investigating the distinguishing number of specific classes of graphs. The greatest interest appears to be focused upon those with distinguishing number $2$, partly because this property requires the automorphism group to have a regular orbit on the power set of the vertex set (see \cite[Proposition 2.7]{BaCa}) and partly because this property tends to be generic for a variety of combinatorial and algebraic structures (see \cite{BaCa} and \cite{CT}, for example).

Some classes of infinite graphs known to have distinguishing number $2$ include all locally finite trees having no 1-valent vertex \cite{watkins:distinguishability}; the infinite hypercube of dimension $n$, the denumerable  random graph, and various ``tree-like" graphs (which are not necessarily locally finite) \cite{imrich_et_al:distinguishing}; and locally finite maps \cite{tucker:distinguishing_maps}.
With the present article, we add several more to the list, including the following.

\begin{thm} \label{thm:primitive_distinguishing_number_2} Every connected denumerable primitive graph with infinite diameter has distinguishing number $2$.
\end{thm}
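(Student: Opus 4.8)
The plan is to exhibit, for the vertex set $V$ of such a graph $\Gamma$ with $G = \aut(\Gamma)$, a subset $Y \subseteq V$ whose setwise stabilizer $G_{\{Y\}}$ is trivial; by the criterion recalled in the introduction, coloring $Y$ red and $V \setminus Y$ white then certifies $2$-distinguishability. Since $G$ is transitive on infinitely many vertices it is nontrivial, so the distinguishing number is automatically at least $2$ and the whole content lies in the upper bound, i.e.\ in producing such a $Y$. Because $G$ acts primitively it is in particular vertex-transitive, and since $\Gamma$ has infinite diameter every vertex has infinite eccentricity: if some $\alpha$ had finite eccentricity $D$, transitivity would force every vertex to have eccentricity $D$ and the diameter to be finite. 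Thus, fixing any base vertex $\alpha_0$, the spheres $S_r(\alpha_0) = \{v : d(\alpha_0, v) = r\}$ are nonempty for every $r \ge 0$ and partition $V$; moreover any $g \in G_{\alpha_0}$ preserves each $S_r(\alpha_0)$ setwise, since automorphisms preserve distance from the fixed point. These infinitely many nonempty concentric spheres are the resource the argument must exploit.

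I would build the coloring in two phases. In the \emph{anchoring} phase I want to color a pattern so that the colored pointed graph $(\Gamma, \alpha_0)$ is rigid in the sense that no color-preserving automorphism can move $\alpha_0$: concretely, I would arrange the colors so that the infinite sequence of ``colored types'' seen at successive distances from $\alpha_0$ is unique to $\alpha_0$ among all vertices, forcing $g\alpha_0 = \alpha_0$ for every $g \in G_{\{Y\}}$. In the \emph{rigidifying} phase I then show that the only element of $G_{\alpha_0} \cap G_{\{Y\}}$ is the identity, by proving by induction on $r$ that such a $g$ fixes $S_r(\alpha_0)$ pointwise. Given that $g$ already fixes the ball $B_{r-1}(\alpha_0)$ pointwise, each vertex of $S_r(\alpha_0)$ is pinned down provided its colored type — its own color together with the colored configuration of its already-fixed neighbours in $S_{r-1}(\alpha_0)$ and, crucially, of its descendants in the deeper spheres — is distinct from that of every other vertex of $S_r(\alpha_0)$.

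The decisive difficulty is that a sphere $S_r(\alpha_0)$ may be \emph{infinite} — the theorem is not restricted to the locally finite case — so with only two colors one cannot separate its vertices by a bounded amount of local data, and one cannot enumerate the a priori uncountable group $G_{\alpha_0}$ to kill its elements one at a time. This is exactly the obstacle the proof must overcome, and it is where I expect primitivity to do the essential work. The idea is to use the infinitely many deeper spheres $S_{r+1}(\alpha_0), S_{r+2}(\alpha_0), \dots$ as an unbounded reservoir in which to encode, for each vertex of $S_r(\alpha_0)$, a distinct infinite binary signature, so that distinct vertices acquire distinct colored types while the signatures stay globally consistent. Primitivity — the absence of any nontrivial $G$-invariant equivalence relation, and hence of any block structure coarsening the distance partition — is what I would invoke to guarantee that consecutive spheres are connected richly enough to realize prescribed, pairwise-distinct signatures on a given sphere; this step replaces the finiteness or local-finiteness hypotheses used in easier cases, and I expect it is precisely the content abstracted into The Distinct Spheres Lemma.

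Finally, I would assemble the two phases into a single stage-by-stage construction over an enumeration of the countable set $V$, which also sidesteps any set-theoretic worry about the cardinality of $G$: the anchoring coloring forces $g\alpha_0 = \alpha_0$, the rigidifying induction then forces $g$ to fix every sphere and hence every vertex, so $g = 1$ and $G_{\{Y\}}$ is trivial, giving distinguishing number $2$.
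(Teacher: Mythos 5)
Your proposal has the right high-level architecture (it mirrors the paper's Distinct Spheres Lemma: anchor a base vertex $\alpha_0$, then use colored witnesses far from $\alpha_0$ to kill the stabilizer $G_{\alpha_0}$), and you correctly identify the two obstacles: spheres may be infinite, and $G_{\alpha_0}$ cannot be enumerated. But the decisive step is missing. You invoke primitivity to claim that ``consecutive spheres are connected richly enough to realize prescribed, pairwise-distinct signatures,'' and this is not something primitivity gives you: you cannot \emph{prescribe} signatures with a coloring, you can only exploit differences that the uncolored graph already possesses, and the existence of such differences is exactly what has to be proved. The paper's actual use of primitivity is quite different and much sharper (its Lemma~\ref{lemma:prim_sphere_difference}): for each fixed $n\geq 1$, the relation $\gamma\sim\delta$ iff $S(\gamma,n)=S(\delta,n)$ is an $\aut(\Gamma)$-invariant equivalence relation, so by primitivity it is trivial or universal; universality is ruled out because connectedness and infinite diameter let one pick $\gamma,\delta$ with $d(\gamma,\delta)=n$, whence $\delta\in S(\gamma,n)\setminus S(\delta,n)$. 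This yields that \emph{any} two distinct vertices have distinct $n$-spheres for \emph{every} $n$, which is the Distinct Spheres Condition; nothing in your sketch establishes this or any substitute for it.

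There is a second genuine gap: your rigidifying induction is circular. To fix $S_r(\alpha_0)$ pointwise you want to separate its vertices by colored data lying in \emph{deeper} spheres, but a color-preserving $g$ fixing $B_{r-1}(\alpha_0)$ pointwise is not yet known to fix those deeper signature-carrying vertices, so it could permute them consistently and your ``colored types'' would not be invariants that pin anything down. The paper avoids any sphere-by-sphere induction precisely to evade this: it enumerates all pairs $\{\gamma_i,\delta_i\}$ of distinct vertices equidistant from $\alpha$, chooses for each a witness $\beta_i\in S(\gamma_i,n_i)\symdiff S(\delta_i,n_i)$ with the $n_i$ growing so fast that the $\beta_i$ have pairwise distinct, non-consecutive distances from $\alpha$ and are pairwise non-adjacent. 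Consequently any automorphism fixing $\alpha$ and stabilizing $Y=\{\beta_i\}$ \emph{setwise} automatically fixes $Y$ \emph{pointwise} (it preserves distance to $\alpha$), so every witness is fixed at once, every pair $\{\gamma_i,\delta_i\}$ is separated at once, and $A_{\alpha,\{Y\}}=\langle 1\rangle$; the anchoring is then handled by $Y'=Y\cup B(\alpha,1)$, where $\alpha$ is the unique vertex $\zeta$ of $Y'$ with $B(\zeta,1)\subseteq Y'$. Your plan needs both of these missing ingredients --- the invariant-relation argument for distinct spheres, and a mechanism making the witness set itself rigid --- before it becomes a proof.
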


\begin{cor} \label{primgraph_dist=2} Every connected infinite, primitive, locally finite graph has distinguishing number $2$.
\end{cor}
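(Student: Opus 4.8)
The plan is to derive this corollary from Theorem~\ref{thm:primitive_distinguishing_number_2}. The theorem already handles every connected denumerable primitive graph of infinite diameter, so the whole task reduces to verifying that an infinite, connected, primitive, locally finite graph $\Gamma$ automatically satisfies the two hypotheses of the theorem that are not already assumed, namely that $\Gamma$ is \emph{denumerable} and that it has \emph{infinite diameter}.

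First I would establish that $\Gamma$ is denumerable. Since $\Gamma$ is connected and locally finite, pick any vertex $\alpha$ and consider the spheres $S_n(\alpha) = \{\beta : d(\alpha,\beta) = n\}$. Local finiteness means each vertex has finite degree, so an easy induction shows each $S_n(\alpha)$ is finite, and connectedness gives $V\Gamma = \bigcup_{n \ge 0} S_n(\alpha)$; thus $V\Gamma$ is a countable union of finite sets and hence countable. Being infinite by hypothesis, $\Gamma$ is therefore denumerable.

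Next I would argue that $\Gamma$ has infinite diameter. This is the step I expect to carry the real content. If the diameter were finite, say equal to some $d$, then every sphere $S_n(\alpha)$ would be empty for $n > d$, so $V\Gamma = \bigcup_{n=0}^{d} S_n(\alpha)$ would be a finite union of finite sets, making $\Gamma$ finite and contradicting the hypothesis that $\Gamma$ is infinite. Hence the diameter must be infinite. (One must be slightly careful that ``infinite diameter'' is meant in the sense that distances between vertices are unbounded, which is exactly what this argument delivers for a connected locally finite infinite graph; the fact that $\Gamma$ is primitive plays no role in establishing either hypothesis and is simply passed along.)

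With both hypotheses verified, $\Gamma$ is a connected denumerable primitive graph with infinite diameter, so Theorem~\ref{thm:primitive_distinguishing_number_2} applies directly and yields that $\Gamma$ has distinguishing number $2$, completing the proof. The only genuine subtlety lies in recognizing that local finiteness is precisely the hypothesis that converts an infinite connected graph into one that is both denumerable and of infinite diameter; no deeper structural property of primitive actions is needed beyond what the theorem already provides.
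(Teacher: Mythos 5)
Your proof is correct and matches the paper's (implicit) argument: the paper states this corollary without a written proof, treating it as an immediate consequence of Theorem~\ref{thm:primitive_distinguishing_number_2}, and the routine verification it leaves to the reader is exactly what you supply --- local finiteness plus connectedness gives finite spheres, hence denumerability, and infiniteness then forces unbounded (infinite) diameter.
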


\begin {thm} \label{thm:vertex-trans} Every denumerable vertex-transitive graph of connectivity 1 has distinguishing number $2$.
\end{thm}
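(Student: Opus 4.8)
The plan is to exploit the tree-like block structure forced by connectivity $1$ and then feed a carefully chosen $2$-coloring into the Distinct Spheres Lemma. First I would pin down the structural constraints. Since a finite connected vertex-transitive graph on at least three vertices has no cut vertex (its block--cut tree is finite, hence has a leaf block, which must contain a non-cut vertex, contradicting transitivity), any denumerable vertex-transitive $\Gamma$ of connectivity $1$ is infinite and \emph{every} vertex is a cut vertex. Consequently the block--cut tree $T$ of $\Gamma$ has all cut-vertex nodes of degree at least $2$, and every lobe (maximal subgraph without a cut vertex) has all of its vertices as cut vertices of $\Gamma$, so every lobe node also has degree at least $2$. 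A tree all of whose nodes have degree at least $2$ has infinite diameter; since any path between vertices lying in lobes that are far apart in $T$ must traverse all the intervening cut vertices, $\Gamma$ itself has infinite diameter. I would also invoke the structure theory of Jung and Watkins to record that all lobes are pairwise isomorphic to a single graph $L$ and that $\aut\Gamma$ acts on $T$ preserving its bipartition into lobe nodes and cut-vertex nodes.

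Next I would fix a root vertex $v_0$ and study the spheres $S_n(v_0)$. The tree structure is the decisive leverage: any automorphism fixing $v_0$ permutes the lobes incident with $v_0$ and, recursively, the rooted branches hanging from them, so its effect on $\Gamma$ is determined by a coherent action on the nested sequence $S_1(v_0), S_2(v_0), \dots$. This is exactly the situation the Distinct Spheres Lemma is built to handle: it suffices to produce a subset $Y \subseteq V\Gamma$ (the black vertices) so that (i) $v_0$ is the unique vertex whose sequence of colored spheres has a prescribed signature, forcing every color-preserving automorphism to fix $v_0$, and (ii) the colors within the spheres destroy the residual action of the stabilizer $G_{v_0}$ on the branches. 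Granting such a $Y$, the lemma yields $\aut\Gamma_{\{Y\}} = 1$ and hence distinguishing number $2$.

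The heart of the argument, and where I expect the main obstacle, is realizing (ii) when the lobes, and therefore the branches, are themselves highly symmetric; the extreme case is $L \cong K_{\aleph_0}$, where $G_{v_0}$ induces a full symmetric group on an infinite family of mutually isomorphic branches. With only two colors one cannot break such symmetry locally. The key idea I would use is that these branches are \emph{infinite}, so there are continuum-many distinct colored branch-patterns available, and I would assign distinct patterns injectively to distinct branches, working outward sphere by sphere. The delicate part is organizing this assignment coherently across all radii at once: the patterns chosen to separate branches at one vertex must not create new symmetries deeper in the tree, and the colored spheres of $v_0$ must end up genuinely distinct from those of every other vertex so that $v_0$ is recovered. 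Managing this simultaneous bookkeeping is precisely what the Distinct Spheres Lemma packages, so the final step of the proof is to verify that the coloring just described satisfies its hypotheses.
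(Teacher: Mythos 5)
Your structural observations are sound (every vertex of a denumerable vertex-transitive graph of connectivity $1$ is a cut vertex, the block--cut tree has no leaves, and $\Gamma$ has infinite diameter), but the core of your argument rests on a misreading of the Distinct Spheres Lemma, and this leaves a genuine gap. The lemma does not accept a coloring as input and certify that its setwise stabilizer is trivial; its hypothesis is the purely metric Distinct Spheres Condition on the \emph{uncolored} graph: there is a vertex $\alpha$ such that any two distinct vertices $\gamma,\delta$ equidistant from $\alpha$ satisfy $S(\gamma,n)\neq S(\delta,n)$ for infinitely many $n$. Given that condition, the lemma itself constructs the $2$-coloring. The ``simultaneous bookkeeping'' you defer to the lemma --- injectively assigning colored patterns to branches, and breaking the symmetry of infinitely many isomorphic branches as in $K_{\aleph_0}$-lobes --- is not something the lemma does, and your sketch of that assignment is exactly the part you leave unproved. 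So, as written, neither is the lemma's hypothesis verified nor is a distinguishing coloring actually produced; your items (i) and (ii) merely restate what a distinguishing coloring would accomplish, which is the conclusion, not a usable hypothesis.

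The missing step --- and essentially the paper's entire proof --- is to verify the DSC from the cut-vertex structure you already established. Fix $\alpha$ and let $\gamma\neq\delta$ be distinct vertices equidistant from $\alpha$. Since every vertex is a cut vertex and the block--cut tree has no leaves, every component of $\Gamma\setminus\{\delta\}$ is infinite (the paper phrases this as: $\Gamma\setminus\{\delta\}$ has at least two infinite components); pick such a component $C$ with $\gamma\notin VC$. As $C$ has infinite diameter, for every $n$ there is $\beta\in VC$ with $d(\delta,\beta)=n$, and since every path from $\gamma$ into $C$ passes through $\delta$, we get $d(\gamma,\beta)=d(\gamma,\delta)+n>n$. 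Hence $\beta\in S(\delta,n)\setminus S(\gamma,n)$, so the spheres differ for \emph{all} $n$, the DSC holds, and the Distinct Spheres Lemma finishes the proof. Note that this argument never needs to confront the internal symmetry of the lobes or construct branch-patterns at all; that difficulty is an artifact of your attempted hand-built coloring, not of the problem.
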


\begin{thm} \label{thm_cart_prod} The Cartesian product of any two connected denumerable graphs of infinite diameter has distinguishing number $2$.
\end{thm}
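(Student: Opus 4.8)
The plan is to produce a subset $Y \subseteq V\Gamma$, where $\Gamma = \Gamma_1 \cart \Gamma_2$, whose setwise stabilizer in $\aut\Gamma$ is trivial, which is the same as two-coloring $V\Gamma$ so that only the identity preserves the coloring. The whole argument rests on the additivity of the product metric, $d_\Gamma\bigl((a,b),(a',b')\bigr) = d_{\Gamma_1}(a,a') + d_{\Gamma_2}(b,b')$, and on the observation that $\Gamma$ is again connected (a product of connected graphs), denumerable (a countable product of countable sets), and of infinite diameter (choosing $a_n$ with $d_{\Gamma_1}(a_0,a_n) \ge n$ already forces $d_\Gamma\bigl((a_0,b_0),(a_n,b_0)\bigr) \ge n$). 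It is essential to note, however, that a Cartesian product is almost never primitive: when $\Gamma_1 \not\cong \Gamma_2$ the partition of $V\Gamma$ into the $\Gamma_1$-fibers $\{a\}\times V\Gamma_2$ is $\aut\Gamma$-invariant. Hence Theorem~\ref{thm:primitive_distinguishing_number_2} does not apply, and I would instead verify the (purely metric) hypotheses of the Distinct Spheres Lemma, which is designed to handle imprimitive actions as well.

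The coloring itself I would build sphere by sphere. Fixing a basepoint $o=(a_0,b_0)$, the balls $B_n(o)=\{(a,b): d_{\Gamma_1}(a,a_0)+d_{\Gamma_2}(b,b_0)\le n\}$ form an exhausting filtration of $V\Gamma$, and one colors the successive spheres $S_n(o)$ in turn, writing into the color pattern an aperiodic, asymmetric \emph{signal}. The signal should accomplish two things: single out the basepoint and its two coordinate projections, so that any color-preserving automorphism is forced to fix these, and record the two factors with deliberately \emph{incompatible} patterns, so that no symmetry can interchange them. Infinite diameter of each factor is precisely what guarantees an unbounded supply of nonempty, distinct spheres on which to inscribe such a signal, and the Distinct Spheres Lemma is then exactly the device that packages this inductive, sphere-by-sphere choice and certifies that the resulting setwise stabilizer is trivial.

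What the signal must ultimately defeat is the automorphism structure of the product. Every automorphism preserves the additive metric and therefore permutes the \emph{layers}; using the transitive closure of the Djokovic--Winkler relation, the edge set of $\Gamma$ splits into factor-classes and each automorphism permutes these classes. In the cleanest case, where the factors are prime and non-isomorphic, this pins the group down to $\aut\Gamma = \aut\Gamma_1 \times \aut\Gamma_2$, and the signal need only fix each projection. In general, however, automorphisms may permute isomorphic prime factors and thereby mix parts of the two given factors with one another, so the asymmetric pattern must be rich enough both to fix every coordinate projection and to forbid every such interchange simultaneously.

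The main obstacle I anticipate has two sources, both absent from the single-graph theorems. First, the factors need not be locally finite, so the spheres $S_n(o)$ may be infinite and no geodesic ray need exist; the construction cannot rest on a single ray and must be carried out on the spheres themselves, which is exactly the generality for which the Distinct Spheres Lemma is built. Second, a factor may itself be an infinite Cartesian product, so the prime factorization of $\Gamma$ can involve infinitely many factors with a correspondingly rich group of factor-permuting symmetries; I would sidestep the delicate "finitely many isomorphic factors" bookkeeping by working only with the two given factors and the coarse metric data fed to the lemma, rather than with the full prime decomposition. Confirming that the asymmetric sphere-coloring leaves no residual within-factor or cross-factor symmetry is where the real care is required, and is the step I expect to consume the bulk of the proof.
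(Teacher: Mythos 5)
You have correctly identified the right tool (the Distinct Spheres Lemma rather than the primitivity theorem), but the proposal never actually supplies the one thing that tool requires: a verification that $\Gamma = \Gamma_1 \cart \Gamma_2$ satisfies the Distinct Spheres Condition. That verification is the entire content of the paper's proof, and it is a short, purely metric computation: suppose $S\bigl((\lambda,\theta),n\bigr)=S\bigl((\lambda',\theta'),n\bigr)$ for some $n$, and set $k = d_{\Gamma_2}(\theta,\theta')$. Since $\Gamma_1$ is connected with infinite diameter, every sphere $S(\lambda,n)$ in $\Gamma_1$ is nonempty; for $\mu \in S(\lambda,n)$ the additive metric gives $(\mu,\theta')\in S\bigl((\lambda,\theta),n+k\bigr)=S\bigl((\lambda',\theta'),n+k\bigr)$, whence $\mu\in S(\lambda',n+k)$, and swapping the roles of $\lambda$ and $\lambda'$ yields $S(\lambda,n)\subseteq S(\lambda',n+k)\subseteq S(\lambda,n+2k)$. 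A vertex cannot lie at distances $n$ and $n+2k$ from $\lambda$ simultaneously, so $k=0$; symmetrically $j=d_{\Gamma_1}(\lambda,\lambda')=0$. Thus \emph{any} two distinct vertices of $\Gamma$ have distinct $n$-spheres for every $n$, which is far stronger than the DSC needs. Nothing in your proposal produces this argument or a substitute for it.

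Instead, you redirect the effort toward constructing an ``aperiodic, asymmetric signal'' coloring and toward controlling $\aut\Gamma$ via the Djokovi\'c--Winkler relation, prime factorizations, and factor-permuting symmetries. This reflects a misunderstanding of the division of labor: once the DSC is verified, the Distinct Spheres Lemma itself builds the $2$-distinguishing set and certifies that its setwise stabilizer is trivial; you do not design any coloring, and you never need to know anything about the structure of $\aut\Gamma$ (the DSC is a condition on the distance function alone, checkable with zero reference to automorphisms). So the step you flag as ``where the real care is required'' --- defeating within-factor and cross-factor symmetries by hand --- is precisely the work the lemma was written to make unnecessary, while the step that genuinely requires care, the metric verification above, is absent. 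As written, the proposal is a plan with its central argument missing, not a proof.
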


Our results can be applied to permutation groups.  A {\em suborbit} of a group $G \leq \sym(V)$ is an orbit of a point stabilizer $G_\alpha$ of $G$; that is, a set of the form $\beta^{G_\alpha} = \{\beta^g :  g \in G_\alpha\}$ where $\alpha, \beta \in V$.

If $G$ is a group of permutations of $V$ and $\alpha,\beta\in V$, then the components of the graph $(V, \{\alpha, \beta\}^G)$ with vertex set $V$ and edge set $\{\alpha, \beta\}^G = \{ \, \{\alpha^g, \beta^g\} :  g \in G\}$ are the equivalence classes of a $G$-invariant equivalence relation. Graphs of this form are called {\em orbital graphs} of $G$.  Thus if $G$ is primitive, then every such orbital graph $(V, \{\alpha, \beta\}^G)$ must be connected whenever  $\alpha\neq\beta$. It is easy to see that for a primitive group $G$, these graphs are all locally finite if and only if the suborbits of $G$ are all finite.

\\

Every finite primitive permutation group of degree $n>32$, other than the symmetric group or alternating group, has distinguishing number $2$ \cite[Theorem 1]{akos_seress}.  Furthermore, for primitive permutation groups of degree $n$ not containing the alternating group, the proportion of subsets with trivial setwise stabilizer tends to $1$ as $n\rightarrow \infty$ \cite{BaCa}. 

Thus the following corollaries of Theorem~\ref{thm:primitive_distinguishing_number_2} should not be surprising.

\begin{cor} \label{cor:distinguish_inf_diam_prim_groups} If $V$ is a denumerable set and $G \leq \sym(V)$ is primitive and has a connected
orbital graph of infinite diameter, then the distinguishing number of $G$ is
$2$.
\end{cor}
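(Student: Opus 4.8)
The plan is to reduce the corollary to Theorem~\ref{thm:primitive_distinguishing_number_2} by working inside the hypothesized orbital graph. Let $\Gamma = (V, \{\alpha,\beta\}^G)$ be a connected orbital graph of $G$ having infinite diameter; such a $\Gamma$ exists by assumption (and, as noted in the Introduction, any orbital graph of a primitive group with $\alpha \neq \beta$ is automatically connected). Since each $g \in G$ sends an edge $\{\alpha^h, \beta^h\}$ to the edge $\{\alpha^{hg}, \beta^{hg}\}$, every element of $G$ is an automorphism of $\Gamma$; thus $G \leq \aut(\Gamma)$. Moreover $\Gamma$ is denumerable because $V$ is. The first task is therefore to verify that $\Gamma$ is a \emph{primitive} graph, i.e.\ that $\aut(\Gamma)$ acts primitively on $V$.

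For this I would use the elementary fact that primitivity passes to overgroups acting on the same set. Concretely: $\aut(\Gamma)$ is transitive on $V$ because it contains the transitive group $G$; and any $\aut(\Gamma)$-invariant equivalence relation on $V$ is, in particular, $G$-invariant, hence---by the primitivity of $G$---either trivial or universal. So $\aut(\Gamma)$ admits no nontrivial invariant equivalence relation and is primitive. Consequently $\Gamma$ is a connected denumerable primitive graph of infinite diameter, and Theorem~\ref{thm:primitive_distinguishing_number_2} yields that $\Gamma$ has distinguishing number $2$.

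It remains to push this conclusion down from $\aut(\Gamma)$ to $G$. By the characterization recorded in the Introduction, a graph of distinguishing number $2$ possesses a subset $Y \subseteq V$ with trivial setwise stabilizer in its automorphism group, i.e.\ $\aut(\Gamma)_{\{Y\}} = 1$. Because $G \leq \aut(\Gamma)$, we get $G_{\{Y\}} \leq \aut(\Gamma)_{\{Y\}} = 1$, so the $2$-coloring with color classes $Y$ and $V \setminus Y$ is preserved by no non-identity element of $G$; hence the distinguishing number of $G$ is at most $2$. Since $G$ is transitive on the infinite set $V$ it is nontrivial, so a single color cannot distinguish it and its distinguishing number is exactly $2$. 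I anticipate no real difficulty: the sole point needing care is the inheritance of primitivity by $\aut(\Gamma)$, and that reduces to the observation that the invariant equivalence relations of an overgroup form a subfamily of those of any subgroup.
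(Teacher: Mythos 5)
Your proof is correct and follows exactly the route the paper intends: the paper states this corollary with no written proof (just a \verb|\qed|), treating it as immediate from Theorem~\ref{thm:primitive_distinguishing_number_2} via the observation---also used explicitly in the paper's proof of Corollary~\ref{cor:distinguish_subdegree_finite_prim_groups}---that the orbital graph is a denumerable, connected, primitive graph of infinite diameter whose automorphism group contains $G$. Your write-up simply supplies the details the authors leave implicit (overgroup inheritance of primitivity, and pushing $2$-distinguishability down from $\aut(\Gamma)$ to its subgroup $G$), all of which are handled correctly.
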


\begin{cor} \label{cor:distinguish_subdegree_finite_prim_groups} If $G\leq \sym(V)$ is infinite and primitive and all suborbits of $G$ are finite, then $G$ has distinguishing number $2$.
\end{cor}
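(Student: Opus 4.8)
The plan is to deduce this directly from Corollary~\ref{cor:distinguish_inf_diam_prim_groups}, whose hypotheses require a denumerable set together with a connected orbital graph of infinite diameter. Thus the whole argument reduces to checking that the hypotheses ``$G$ infinite, primitive, with all suborbits finite'' force precisely these two features.

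First I would observe that $G \leq \sym(V)$ infinite forces $V$ to be infinite, so we may choose distinct $\alpha,\beta \in V$ and form the orbital graph $\Gamma = (V, \{\alpha,\beta\}^G)$. Since $G$ is primitive, the remarks preceding the corollary guarantee that $\Gamma$ is connected; and since all suborbits of $G$ are finite, those same remarks (orbital graphs are locally finite if and only if the suborbits are finite) guarantee that $\Gamma$ is locally finite.

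Next I would apply local finiteness twice. A connected locally finite graph is the increasing union of the finite balls $B_n(\alpha)$ of radius $n$ about $\alpha$, so $V$ is countable; together with the infiniteness of $V$ this shows that $V$ is denumerable. The same balls show that $\Gamma$ has infinite diameter, for were its diameter some finite $d$, then $V = B_d(\alpha)$ would be finite, contradicting the infiniteness of $V$. Hence $\Gamma$ is a connected orbital graph of infinite diameter on a denumerable set, and Corollary~\ref{cor:distinguish_inf_diam_prim_groups} yields that $G$ has distinguishing number $2$.

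I expect no serious obstacle, since the statement is essentially a specialization of Corollary~\ref{cor:distinguish_inf_diam_prim_groups}. The only point requiring any care is the elementary but crucial observation that an infinite connected locally finite graph automatically has infinite diameter; this is exactly what converts the finiteness of the suborbits into the infinite-diameter hypothesis needed to invoke the corollary.
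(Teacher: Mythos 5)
Your proof is correct and follows essentially the same route as the paper: both form the orbital graph $(V,\{\alpha,\beta\}^G)$, use primitivity for connectedness and finite suborbits for local finiteness, and then reduce to an earlier corollary of Theorem~\ref{thm:primitive_distinguishing_number_2}. The only cosmetic difference is that the paper invokes Corollary~\ref{primgraph_dist=2}, which absorbs the observation that an infinite connected locally finite graph is denumerable with infinite diameter, whereas you spell that observation out explicitly and invoke Corollary~\ref{cor:distinguish_inf_diam_prim_groups} instead.
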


All the results in this paper follow with little effort from a versatile lemma which we call the Distinct Spheres Lemma. It provides us with a very simple criterion,  the ``Distinct Spheres Condition," that is sufficient for $2$-distinguishability of connected denumerable graphs. 

A graph $\Gamma$ is said to satisfy {\bf The Distinct Spheres Condition}, or {\bf DSC}, if there exists a vertex $\alpha\in V\Gamma$ such that for all distinct $\gamma,\delta\in V\Gamma$,
\begin{equation*}
d(\alpha,\gamma)=d(\alpha,\delta) {\text{ implies }}  S(\gamma,n)\neq S(\delta,n)\ {\text{for infinitely many }}n\in\N,
\end{equation*}
where $S(\gamma, n)$ denotes the set of vertices of $\Gamma$ at distance $n$ from $\gamma$.
\vskip.2cm
Clearly, if The Distinct Spheres Condition fails to hold for a connected infinite graph $\Gamma$ and if there exist a pair of vertices $\gamma, \delta\in V\Gamma$ and $n\in\N$ such that $S(\gamma, n) = S(\delta, n)$, then $S(\gamma, m) = S(\delta, m)$ for all $m \geq n$.  Observe that if $\Gamma$ satisfies the DSC, then $\Gamma$ has infinite diameter.

Other sufficient conditions for 2-distinuishability can be found in the literature. Indeed, the so called {\em weak-e.c.}\,condition, defined by Bonato and Deli\'{c} in \cite{BoDe}, is a sufficient condition for a denumerable  graph to be $2$-distinguishable; unlike graphs satisfying the Distinct Spheres Condition, graphs satisfying the weak-e.c.\,condition have diameter $2$ and no vertex of finite valence.

\\

An outline of this paper is as follows.  In Section 2, we show that the Distinct Spheres Condition  implies $2$-distinguishability for connected denumerable graphs of infinite diameter.  In Section 3, we show that connected denumerable primitive graphs satisfy the DSC.  In Section 4, we show that denumerable vertex-transitive graphs of connectivity $1$ and the Cartesian product of connected denumerable graphs of infinite diameter satisfy the DSC. In Section 5, we consider the Infinite Motion Conjecture \cite{tucker:distinguishing_maps}, that if a locally finite graph has infinite motion (i.e. every non-identity automorphism has infinite support), then the graph has distinguishing number $2$; we show, in fact, that the DSC implies infinite motion.  In Section 6, we present an infinite class of denumerable, vertex-transitive graphs not satisfying the DSC that are $2$-distinguishable and have infinite motion, showing that the DSC is in no way necessary.

%
%
\section{A rather useful lemma}

In this paper our graphs have no loops or multiple edges.  All our graphs are connected. The distance function in a graph $\Gamma$ is denoted by $d_\Gamma$, but more simply by $d$ when $\Gamma$ is the only graph under consideration.  For a vertex $\alpha \in V\Gamma$, we define the {\em sphere of radius $r$ with center $\alpha$} to be
\[S(\alpha, r) = \{\beta \in V\Gamma :  d(\alpha, \beta) = r \},\]
and the {\em ball of radius $r$ with center $\alpha$} to be
\[B(\alpha, r) = \{\beta \in V\Gamma :  d(\alpha, \beta) \leq r \}.\] 
The {\em diameter} of $\Gamma$ is $\sup \{d_\Gamma(\alpha, \beta) : \alpha, \beta \in V\Gamma\}$, which may, of course, be infinite.

If $G\leq\aut(\Gamma)$ and $Y \subseteq V\Gamma$, we define the {\em setwise stabilizer of $Y$ in $G$} to be the subgroup
\[G_{\{Y\}} = \{g \in G :  y^g \in Y \text{ for all } y \in Y\},\]
and the {\em pointwise stabilizer of $Y$ in $G$} to be the subgroup
\[G_{(Y)} = \{g \in G :  y^g =y \text{ for all } y \in Y\}.\]

The set of positive integers is denoted by $\N$.

\\

The following useful lemma is sometimes all that is needed to determine whether a given infinite graph has distinguishing number $2$.

\begin{distinct_spheres_lemma} \label{lemma:distinguishing} Suppose that $\Gamma$ is a connected denumerable graph. If $\Gamma$ satisfies The Distinct Spheres Condition, then $\aut(\Gamma)$ (and therefore  every subgroup of $\aut(\Gamma)$ as well as $\Gamma$ itself) has distinguishing number $2$.
\end{distinct_spheres_lemma}

\begin{proof}
Let $\alpha \in V\Gamma$ be as described in the DSC, and write $A = \aut(\Gamma)$. 

We enumerate all 2-subsets $\{\gamma, \delta\}$ of vertices equidistant from $\alpha$ as follows. Let $P = \big\{\{\gamma, \delta\} :  \gamma, \delta \in V \text{ distinct and } d(\alpha, \gamma)= d(\alpha, \delta)\big\}$. Because $V\Gamma$ is denumerable, so is $P$.
 We may thus fix an enumeration of $P$ and write $P = \{\{\gamma_i, \delta_i\} :   i \in\N\}$, with $\gamma_1, \delta_1 \in S(\alpha, 1)$.

Consider initially the pair $(\gamma_1, \delta_1)$, and arbitrarily choose $N_1\geq 3$. By the Distinct Spheres Condition, there exists an integer $n_1 > N_1$ such that the symmetric difference $S(\gamma_1, n_1) \symdiff S(\delta_1, n_1)$ is not empty. Choose a vertex $\beta_1 \in S(\gamma_1, n_1) \symdiff S(\delta_1, n_1)$. If $\gamma_1^g = \delta_1$ for some $g\in A$, then $\left ( S(\gamma_1, n_1) \right )^g =S(\delta_1, n_1)$, which implies $\beta_1^g \not = \beta_1$. Thus $\delta_1 \not \in \gamma_1^{A_{\beta_1}}$ and, symmetrically, $\gamma_1\notin\delta_1^{A_{\beta_1}}$.

For $i > 1$ we proceed to choose $N_i, n_i \in \mathbb{N}$, and $\beta_i \in V\Gamma$ inductively. Suppose we have already chosen suitable $N_{i-1}, n_{i-1} \in \N$ and $\beta_{i-1} \in V\Gamma$. Let $N_i = d(\alpha, \gamma_i) + d(\alpha, \beta_{i-1}) + 1$, and so $N_i = d(\alpha, \delta_i) + d(\alpha, \beta_{i-1}) +1$. There exists a least integer $n_i > N_i$ such that $S(\gamma_i, n_i) \symdiff S(\delta_i, n_i)$ is not empty, and we may thus choose $\beta_i \in S(\gamma_i, n_i) \symdiff S(\delta_i, n_i)$. (Observe that, for each $i\in\N$, the vertex $\beta_{i-1}$ determines the integer $N_i$, which determines $n_i$, which in turn determines $\beta_i$, and so on.)  Again we note that if $\gamma_i^g = \delta_i$ for some $g \in A$, then $\left ( S(\gamma_i, n_i) \right )^g = S(\delta_i, n_i)$, and so $\beta_i^g \not = \beta_i$. Thus $\delta_i \not \in \gamma_i^{A_{\beta_i}}$ and, symmetrically, $\gamma_i\notin\delta_i^{A_{\beta_i}}$.  

Let $Y = \{\beta_i :   i=1,2,\ldots\}$.   (As each vertex $\beta_i$ is determined by its predecessor $\beta_{i-1}$, the Axiom of Choice does not come into play in defining the set $Y$.)

We claim that 
\begin{equation}\label{distancing_betas}
d(\alpha, \beta_i) > d(\alpha, \beta_{i-1})+1\quad{\text{for all }}i\geq2.
\end{equation}
Indeed, observe that for each $i\in\N$, we have for some $\zeta_i\in\{\gamma_i,\delta_i\}$,
\begin{eqnarray*}
d(\alpha,\beta_{i-1})&=&N_i-d(\alpha,\zeta_i)-1\\
&\leq& n_i-d(\alpha,\zeta_i)-2\\
&=&d(\beta_i,\zeta_i)-d(\alpha,\zeta_i)-2\\
&\leq& d(\alpha,\beta_i)-2.
\end{eqnarray*}
By iteration of inequality (\ref{distancing_betas}),  we have
\begin{equation}\label{iterate}
 d(\alpha,\beta_i)> d(\alpha,\beta_j)+i-j\quad{\text{for}}\ 1\leq j<i,
\end{equation} 
and, in particular, 
\begin{equation}\label{distancefrom_beta1}
d(\alpha, \beta_i) > d(\alpha, \beta_1)+i-1>2.
\end{equation}

By the inequality (\ref{iterate}), no two elements of $Y$ are equidistant from $\alpha$, and no two elements of $Y$ are adjacent to each other.
For if $\beta_i$ and $\beta_j$ were adjacent, then $|d(\alpha,\beta_i)-d(\alpha,\beta_j)|\leq1$ would have to hold, contrary to (\ref{iterate}).  It follows from inequality (\ref{distancefrom_beta1}) that $Y \cap B(\alpha, 1) = \emptyset$.

Hence the subgroup $A_{\alpha, \{Y\}}$ fixes the set $Y$ pointwise. Thus for each $i \geq 1$ we have that $A_{\alpha, \{Y\}} \leq A_{\alpha, \beta_i}$, and therefore
\begin{equation*}
\delta_i \not \in \gamma_i^{A_{\alpha, \{Y\}}}\quad{\text{and}}\quad\gamma_i\notin\delta_i^{A_{\alpha,\{Y\}}}.
\end{equation*}

Now $\delta \in \gamma^{A_\alpha}$ only if $\delta$ and $\gamma$ are equidistant from $\alpha$.  But the set $P$ contains {\em all} pairs of distinct vertices equidistant from $\alpha$. Thus $A_{\alpha, \{Y\}}$ fixes every vertex of $\Gamma$, and because the action of $A$ on $V\Gamma$ is faithful, $A_{\alpha, \{Y\}} = \langle 1 \rangle$.

Let $Y' = Y \cup B(\alpha, 1)$. There is no element $\zeta \in Y'\setminus \{\alpha\}$ such that $B(\zeta, 1) \subseteq Y'$, and so  $A_{\{Y'\}} \leq A_\alpha$.  
Since every automorphism of $\Gamma$ that fixes $Y'$ setwise must fix $\alpha$, and $Y \cap B(\alpha, 1) = \emptyset$, we have $A_{\{Y'\}} \leq A_{\alpha, \{Y\}} = \langle 1 \rangle$,  which means that $A$ has distinguishing number $2$.
\end{proof}

\begin{rem} 
{\normalfont
Observe that the set $Y$ constructed in the above proof has the property that $Y \cap B(\alpha,N_1-1)=\emptyset$.  This holds because $N_1\leq n_1-1\leq d(\alpha,\beta_1)<d(\alpha,\beta_i)$ for all $i>1$.  Since $N_1\geq3$ may be arbitrarily large, aside from the vertex $\alpha$ and its neighborhood, the  2-distinguishing set $Y'$ can be chosen to be arbitrarily far removed from the vertex $\alpha$.
}
\end{rem}

%
%
\section{Distinguishability of infinite primitive groups and graphs}

\begin{lemma} \label{lemma:prim_sphere_difference} If $\Gamma$ is a primitive graph with infinite diameter, then for all distinct $\gamma, \delta \in V\Gamma$ and all integers $n\geq 1$,
\[S(\gamma, n) \not = S(\delta, n).\]
In particular, $\Gamma$ satisfies the Distinct Spheres Condition.
\end{lemma}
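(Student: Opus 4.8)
The plan is to argue by contradiction, converting a single coincidence of spheres into a global one by means of an orbital graph, and to use infinite diameter only to guarantee that spheres are nonempty. Write $G=\aut(\Gamma)$ and suppose, for contradiction, that $S(\gamma_0,n)=S(\delta_0,n)$ for some distinct $\gamma_0,\delta_0\in V\Gamma$ and some $n\geq 1$.

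The first and central step is to propagate this coincidence across every edge of the orbital graph $\Delta=(V\Gamma,\{\gamma_0,\delta_0\}^G)$. The point is that automorphisms preserve distance, so $S(\gamma^g,n)=S(\gamma,n)^g$ for every $g\in G$; hence if $\{\gamma,\delta\}=\{\gamma_0,\delta_0\}^g$ is any edge of $\Delta$, then applying $g$ to the equation $S(\gamma_0,n)=S(\delta_0,n)$ gives $S(\gamma,n)=S(\delta,n)$. Thus $S(\gamma,n)=S(\delta,n)$ holds across every edge of $\Delta$.

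Next I would invoke primitivity in the form recorded in the introduction: because $G$ is primitive and $\gamma_0\neq\delta_0$, the orbital graph $\Delta$ is connected. Walking along a $\Delta$-path between two arbitrary vertices and chaining the edgewise equalities shows that $\gamma\mapsto S(\gamma,n)$ is constant on $V\Gamma$; call its common value $T$. Here infinite diameter enters: a connected graph of infinite diameter has no vertex of finite eccentricity, since a vertex of eccentricity $e$ would force diameter at most $2e$; consequently $S(\gamma,n)\neq\emptyset$ for every $\gamma$ and every $n\geq 1$, and in particular $T\neq\emptyset$. Choosing any $t\in T$, constancy gives $S(t,n)=T\ni t$, so $d(t,t)=n\geq 1$, which is absurd. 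This contradiction shows that no two distinct vertices can share an $n$-sphere, which is exactly $S(\gamma,n)\neq S(\delta,n)$ for all distinct $\gamma,\delta$ and all $n\geq 1$. The final clause is then immediate, since for any vertex $\alpha$ and any distinct equidistant $\gamma,\delta$ the spheres differ for every $n$, a fortiori for infinitely many.

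I expect the only real subtlety to be the bookkeeping around the role of infinite diameter: the statement genuinely fails without it (in a finite complete graph, for instance, all spheres of radius at least $2$ are empty and hence coincide), so the proof must use infinite diameter precisely to force $T\neq\emptyset$. The orbital-graph propagation and the appeal to connectedness are short once one notices that spheres commute with the group action; the one temptation to avoid is reproving that orbital graphs of primitive groups are connected, which is already stated in the introduction and may simply be cited.
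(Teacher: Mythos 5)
Your proof is correct, but it is packaged differently from the paper's. The paper never mentions orbital graphs: for fixed $n$ it defines the relation $\gamma \sim \delta$ if and only if $S(\gamma,n)=S(\delta,n)$, observes that this is an $\aut(\Gamma)$-invariant equivalence relation (because automorphisms preserve distance), and invokes the definition of primitivity to conclude that $\sim$ is either trivial or universal; universality is then refuted by using connectedness and infinite diameter to choose $\gamma,\delta$ with $d(\gamma,\delta)=n$, whence $\delta\in S(\gamma,n)\setminus S(\delta,n)$. You instead run the argument through the orbital-graph characterization of primitivity: a single coincidence of $n$-spheres propagates along the connected orbital graph of the offending pair, making the sphere map globally constant, and nonemptiness of spheres (forced by infinite diameter) gives the absurdity $d(t,t)=n\geq 1$. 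At bottom these are the same use of primitivity---connectivity of all orbital graphs is equivalent to the nonexistence of proper nontrivial invariant equivalence relations, as the paper's introduction itself notes---but the paper's version is more economical: the trivial-or-universal dichotomy does in one line what your propagation-along-paths step does by hand, and it requires no choice of a distinguished pair $\{\gamma_0,\delta_0\}$. What your route buys is that it only consumes a citable black box (orbital graphs of primitive groups are connected), and its terminal contradiction ($t\in S(t,n)$ for the common sphere $T$) is marginally cleaner than exhibiting two vertices at distance exactly $n$; note, though, that both refutations rely on connectedness of $\Gamma$ in the same way, since extracting a vertex at distance exactly $n$ from an infinite-eccentricity vertex requires walking along a geodesic.
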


\begin{proof} Suppose $\Gamma$ is a primitive graph with infinite diameter, and write $A = \aut(\Gamma)$.  Fix $n \geq 1$, and define the equivalence relation $\sim$ on $V\Gamma$ by $\gamma \sim \delta$ if and only if $S(\gamma, n) = S(\delta, n)$.  Since graph automorphisms preserve distance, the relation $\sim$ is an $A$-invariant relation on $V\Gamma$.

Since $A$ is primitive, the only $A$-invariant equivalence relations on $V\Gamma$ are the trivial relation and universal relation.  First suppose that $\sim$ is universal, that is, $S(\gamma, n) = S(\delta, n)$ for all $\gamma, \delta \in V\Gamma$.  Since $\Gamma$ is connected and has infinite diameter, one may choose $\gamma$ and $\delta$ such that $d(\gamma,\delta)=n$.  But then $\delta\in S(\gamma,n)\setminus S(\delta,n)$, giving a contradiction.  It follows that $\sim$ is trivial, as required. \end{proof}

We are now able to prove straightforwardly the first result stated in our introduction.

\begin{citethm}{\ref*{thm:primitive_distinguishing_number_2}}  Every connected denumerable primitive graph with infinite diameter has distinguishing number $2$.
\end{citethm}

\begin{proof} Suppose $\Gamma$ is denumerable and primitive with infinite diameter.  Because $\Gamma$ is primitive, it is connected.  By Lemma~\ref{lemma:prim_sphere_difference}, $\Gamma$ satisfies the Distinct Spheres Condition.
Thus $\Gamma$ has distinguishing number $2$ by The Distinct Spheres Lemma.
\end{proof}

Theorem~\ref{thm:primitive_distinguishing_number_2} fails if one removes the requirement that $\Gamma$ have infinite diameter. As an extreme example, the complete denumerable graph, though primitive, has distinguishing number $\aleph_0$.

\begin{citecor}{\ref*{cor:distinguish_inf_diam_prim_groups}}  If $V$ is a denumerable set and $G \leq \sym(V)$ is primitive and has a connected
orbital graph of infinite diameter, then the distinguishing number of $G$ is
$2$. \qed
\end{citecor}

\begin{citecor}{\ref*{cor:distinguish_subdegree_finite_prim_groups}} If $G\leq \sym(V)$ is infinite and primitive and all suborbits of $G$ are finite, then $G$ has distinguishing number $2$.
\end{citecor}

\begin{proof} Since $G$ is primitive, for any distinct elements $\alpha, \beta \in V$ the graph $\Gamma = (V, \{\alpha, \beta\}^G)$ is connected, infinite, edge-transitive, and primitive.  
That all suborbits are finite means that there are only finitely many vertices $\gamma$ such that $\{\alpha,\gamma\}\in\{\alpha, \beta\}^G$.  Since $\Gamma$ is vertex-transitive, it must be locally finite.  By Corollary~\ref{primgraph_dist=2}, the graph $\Gamma$ and hence the group $G$ have distinguishing number $2$.
\end{proof}
\\

%
%
\section{Further applications of the Distinct Spheres Lemma}

In this section we apply the Distinct Spheres Lemma to prove the $2$-distinguishability of some other types of infinite graphs. 

\\

A connected graph $\Gamma$ has {\em connectivity} 1 if there exists a vertex $\alpha \in V\Gamma$ such that $\Gamma \setminus \{\alpha\}$ is not connected;
such a vertex $\alpha$ is called a {\em cut vertex}. 

\begin{thm} \label{thm:connectivity_one} Suppose $\Gamma$ is a denumerable connected graph such that for every vertex $\delta \in V\Gamma$ the graph $\Gamma \setminus \{\delta\}$ has at least two infinite components. Then $\Gamma$ is 2-distinguishable.
\end{thm}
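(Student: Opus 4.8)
The plan is to verify that $\Gamma$ satisfies The Distinct Spheres Condition and then appeal to The Distinct Spheres Lemma, exactly as in the proof of Theorem~\ref{thm:primitive_distinguishing_number_2}. Since $\Gamma$ is connected and denumerable, it suffices to exhibit a center $\alpha$ for which every pair of vertices equidistant from $\alpha$ has spheres that differ infinitely often; in fact I would prove the stronger statement that \emph{every} pair of distinct vertices $\gamma,\delta$ satisfies $S(\gamma,n)\neq S(\delta,n)$ for infinitely many $n$, so that the condition holds for an arbitrary choice of $\alpha$. Concretely, given any threshold $n_0$, the goal is to produce a single radius $m\geq n_0$ at which the two spheres disagree.

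The engine of the argument is a structural lemma I would isolate first: for every vertex $\alpha$ and every \emph{infinite} component $C$ of $\Gamma\setminus\{\alpha\}$, the set $\{d(\alpha,v):v\in C\}$ is unbounded. This is the step I expect to be the main obstacle, precisely because $\Gamma$ is not assumed to be locally finite, so an infinite component could a priori be ``broad but shallow,'' staying within a bounded distance of $\alpha$. To rule this out I would argue by contradiction: suppose instead that $D:=\sup\{d(\alpha,v):v\in C\}$ is finite, and choose $u\in C$ with $d(\alpha,u)=D$ (the supremum is attained, being a bounded set of positive integers). Removing $u$, let $P$ be the component of $\Gamma\setminus\{u\}$ containing $\alpha$. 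Each remaining component $Q$ contains a neighbour of $u$ and omits $\alpha$, so on one hand every path from $\alpha$ to a vertex $v\in Q$ runs through $u$, giving $d(\alpha,v)=D+d(u,v)>D$; on the other hand $Q\cup\{u\}$ is connected in $\Gamma\setminus\{\alpha\}$ and meets $C$ at $u$, forcing $Q\subseteq C\subseteq B(\alpha,D)$. These two facts are incompatible unless $Q$ is empty, whence $\Gamma\setminus\{u\}$ is connected --- contradicting the hypothesis that $\Gamma\setminus\{u\}$ has at least two infinite components.

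With this lemma in hand the verification of the Distinct Spheres Condition is short. Fix distinct $\gamma,\delta$, set $k=d(\gamma,\delta)\geq1$, and let $n_0$ be an arbitrary threshold. By hypothesis $\Gamma\setminus\{\gamma\}$ has at least two infinite components, and $\delta$ lies in at most one of them, so I may pick an infinite component $C$ with $\delta\notin C$. The structural lemma supplies a vertex $w\in C$ with $m:=d(\gamma,w)\geq n_0$. Since $\gamma$ separates $w$ from $\delta$, every path from $\delta$ to $w$ runs through $\gamma$, giving $d(\delta,w)=k+m\neq m$; hence $w\in S(\gamma,m)\setminus S(\delta,m)$, and the spheres already differ at radius $m\geq n_0$. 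As $n_0$ was arbitrary, $S(\gamma,n)\neq S(\delta,n)$ for infinitely many $n$, so $\Gamma$ satisfies the Distinct Spheres Condition and The Distinct Spheres Lemma yields that $\Gamma$ is $2$-distinguishable.
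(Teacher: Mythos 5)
Your proof is correct, and its overall route is the paper's: verify the Distinct Spheres Condition using the fact that a cut vertex separates spheres, then invoke the Distinct Spheres Lemma; your closing computation (every path from $\delta$ to $w$ passes through $\gamma$, so $d(\delta,w)=k+m>m$, hence $w\in S(\gamma,m)\setminus S(\delta,m)$) is exactly the paper's. The genuine difference lies in the step the paper dispatches with ``It is immediate by induction that $C$ has infinite diameter.'' You rightly single this out as the crux: since $\Gamma$ is not assumed locally finite, an infinite component can in general be bounded (an infinite star has diameter $2$), so the hypothesis that \emph{every} vertex deletion leaves at least two infinite components must be used a second time. The induction the paper has in mind marches outward from $\delta$ one step at a time, invoking the hypothesis at each newly reached vertex $v$ to produce an infinite component of $\Gamma\setminus\{v\}$ avoiding $\delta$ (necessarily contained in $C$) in which to take the next step. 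Your argument is extremal instead: you apply the hypothesis just once, at a vertex $u\in C$ at maximal distance $D$ from $\alpha$, and show that every component of $\Gamma\setminus\{u\}$ other than the one containing $\alpha$ would have to lie simultaneously inside $B(\alpha,D)$ and outside it, forcing $\Gamma\setminus\{u\}$ to be connected --- contradicting the hypothesis. Both arguments are sound; yours buys a fully explicit, one-shot use of the hypothesis where the paper leaves an induction to the reader, at the cost of arguing by contradiction rather than by direct construction. (Proving the sphere inequality for \emph{all} distinct pairs, rather than only for pairs equidistant from a fixed center, is a harmless strengthening; the paper's own argument never uses equidistance either.)
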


\begin{proof} Fix $\alpha \in V\Gamma$ and choose a pair of vertices $\gamma, \delta \in V\Gamma$ equidistant from $\alpha$. (Since every vertex is a cut vertex, any vertex would be suitable a suitable choice for $\alpha$.)  By assumption, the graph $\Gamma \setminus \{\delta\}$ contains at least two infinite components; let $C$ be such a component with $\gamma \notin VC$.  It is immediate by induction that $C$ has infinite diameter.  Hence, for all $n \in \mathbb{N}$ we have that $S(\delta, n) \cap VC \neq \emptyset$.  Since any path in $\Gamma$ joining $\gamma$ to some vertex $\beta \in S(\delta, n) \cap VC$ must contain $\delta$, we have $d(\gamma, \beta) = d(\gamma, \delta) + d(\delta, \beta) > n$. Hence $S(\gamma, n) \not = S(\delta, n)$, and so $\Gamma$ has distinguishing number $2$ by The Distinct Spheres Lemma.
\end{proof}  

\begin{cor}  \cite{watkins:distinguishability} Every denumerable tree without vertices of valence $1$ is $2$-distinguishable.
\end{cor}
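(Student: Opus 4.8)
The plan is to deduce this corollary directly from Theorem~\ref{thm:connectivity_one}, so the main task is to verify that a denumerable tree $T$ with no vertex of valence $1$ satisfies the hypothesis of that theorem: namely, that for every vertex $\delta\in VT$, the graph $T\setminus\{\delta\}$ has at least two infinite components. First I would observe that since $T$ is a tree, deleting any vertex $\delta$ of valence $k$ splits $T$ into exactly $k$ components, one ``hanging'' from each neighbor of $\delta$; because $T$ has no $1$-valent vertex, we have $k\geq 2$, so there are at least two components to begin with. The crux is then to show that each of these components is infinite.

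The key step is to argue that no component of $T\setminus\{\delta\}$ can be finite. Here I would use the hypothesis that $T$ has no vertex of valence $1$ in the form of a ``no finite branch'' argument. Suppose for contradiction that some component $C$ of $T\setminus\{\delta\}$ is finite. Since $C$ is a finite subtree, it has a leaf, i.e.\ a vertex of degree $1$ within $C$. The only way such a vertex can fail to have valence $1$ in $T$ is for its missing edge to lead back to $\delta$; but at most one vertex of $C$ is adjacent to $\delta$ (since $T$ is a tree and $C$ is a single component of $T\setminus\{\delta\}$). Thus any other leaf of the finite tree $C$ would be a vertex of valence $1$ in $T$, contradicting the hypothesis. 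I expect this to be the main obstacle, since one must handle carefully the case where the finite component $C$ is a single vertex or a path, and confirm that a finite tree always has at least two leaves (unless it is a single vertex, which itself would be $1$-valent or $0$-valent in $T$, again a contradiction).

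Once every component of $T\setminus\{\delta\}$ is shown to be infinite for every $\delta$, the hypothesis of Theorem~\ref{thm:connectivity_one} is met, and the conclusion that $T$ is $2$-distinguishable follows immediately. I would close by simply invoking Theorem~\ref{thm:connectivity_one}. The whole argument is short because all the real work — the Distinct Spheres construction — has already been carried out; this corollary is purely a verification that trees of minimum valence $2$ fall into the scope of the theorem.
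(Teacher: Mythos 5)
Your proposal is correct and follows exactly the route the paper intends: the corollary is stated as an immediate consequence of Theorem~\ref{thm:connectivity_one}, and your verification that deleting any vertex of a leafless denumerable tree leaves at least two components, each necessarily infinite (since a finite component would be a finite subtree whose leaves, except possibly the one neighbor of the deleted vertex, would be $1$-valent in $T$), is precisely the argument the paper leaves implicit. No gaps; the acyclicity observation that each component meets the neighborhood of the deleted vertex in exactly one vertex is the right way to make it rigorous.
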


The following theorem, stated in our introduction, is an immediate consequence of Theorem~\ref{thm:connectivity_one}.

\begin{citethm}{\ref*{thm:vertex-trans}} Every denumerable vertex-transitive graph of connectivity 1 has distinguishing number $2$.
\end{citethm}

The {\em Cartesian product} $\Gamma=\Lambda \cart \Theta$ of graphs $\Lambda$ and $\Theta$ is the graph with vertex set $V\Gamma=V\Lambda \times V\Theta$, where vertices $(\lambda, \theta), (\lambda', \theta')$ are adjacent in $\Gamma$ if and only if either $\lambda = \lambda'$ and $d_{\Theta}(\theta, \theta') = 1$, or $\theta = \theta'$ and $d_\Lambda(\lambda, \lambda') = 1$. In particular, distance in $\Gamma$ satisfies:
\[d_\Gamma((\lambda, \theta), (\lambda', \theta')) = d_\Lambda(\lambda, \lambda') + d_\Theta(\theta, \theta').\]  
The graphs $\Lambda$ and $\Theta$ are called {\em factors} of $\Gamma$.  It is easy to see that the operation of Cartesian product is associative and commutative. A Cartesian product of two graphs is connected if and only if all of its factors are connected.
Our next result establishes the distinguishability of the Cartesian product of two graphs, but it can of course be extended inductively to the Cartesian product of finitely many graphs.

\begin{citethm}{\ref*{thm_cart_prod}} The Cartesian product of any two connected denumerable graphs of infinite diameter has distinguishing number $2$.
\end{citethm}

\begin{proof}  Let $\Lambda$ and $\Theta$ be connected denumerable graphs having infinite diameter, and let $\Gamma=\Lambda\square\Theta$.  (Spheres within any of these three graphs are understood to be defined in terms of the distance metric for that graph.)   For all $n\in\N$ and all $\lambda, \lambda' \in V\Lambda$ and $\theta, \theta' \in V\Theta$, we will show that $(\lambda,\theta)\neq (\lambda',\theta')$ implies $S((\lambda,\theta),n)\neq S((\lambda', \theta'),n)$, from which the theorem follows by the Distinct Spheres Lemma.

Suppose that $S((\lambda,\theta),n)=S((\lambda', \theta'),n)$ for some $n$ (and hence for all integers greater than $n$).  Define $j = d_\Lambda(\lambda, \lambda')$ and $k = d_\Theta(\theta, \theta')$.   

Since $\Lambda$ is connected and has infinite diameter, for any vertex $\mu\in S(\lambda,n)$ we have nonvacuously that $d_\Gamma((\lambda,\theta),(\mu,\theta'))=d_\Lambda(\lambda, \mu)+d_\Theta(\theta,\theta')=n+k$.  Hence $(\mu,\theta')\in S((\lambda,\theta),n+k)=S((\lambda',\theta'),n+k)$ by our assumption.  This yields $d_\Lambda(\lambda',\mu)=d_\Gamma((\lambda',\theta'),(\mu,\theta'))=n+k.$  Thus $\mu\in S(\lambda',n+k)$.  Next we swap $\lambda$ with $\lambda'$ and replace $n$ by $n+k$ in the foregoing argument to obtain
\begin{equation*}
S(\lambda,n)\subseteq S(\lambda', n+k)\subseteq S(\lambda, n+2k).
\end{equation*}
Hence $k=0$ and therefore $\theta = \theta'$.

Similarly, since $\Theta$ is connected and has infinite diameter, $d_\Gamma((\lambda, \theta), (\lambda', \theta'))$ $= d_\Theta(\theta, \theta')+j$, and we deduce that $S(\theta,n)\subseteq S(\theta, n+2j)$.  Hence $j = 0$ and therefore $\lambda = \lambda'$.
\end{proof}

%
%

\section{The infinite motion conjecture} \label{section:infinite_motion}
Given a permutation group $G\leq \sym(V)$ and a permutation $g\in G$, the {\em support} of $g$ is the set $\{\alpha\in V:\alpha^g\neq\alpha\}$ and the {\em motion} of $g$, denoted $m(g)$, is the cardinality of its support. The {\em motion} of $G$, denoted $m(G)$, is defined to be  $\inf\{m(g) : g \in G\}$.   If $G$ contains no element of finite support, then $G$ is said to have {\em infinite motion}. 

The motion of $G$ is usually called the {\em minimal degree} of $G$. However, in the first paper \cite{RS} to exploit effectively the minimal degree of a group in the context of distinguishability of graphs, the term ``motion" is used, as is done here.

The Motion Lemma of Russell and Sundaram \cite{RS} states that if $G$ and $V$ are finite and $m(G)>2\log_2(|G|)$, then $G$ has distinguishing number 2.  Their proof is a short and elegant application of the probabilistic method.  (See \cite{CT} for applications of the Motion Lemma in a wide variety of contexts.)  Thus, in the finite case, when $m(G)$ is large enough compared to $|V|$, then $G$ has distinguishing number 2. When $G$ is infinite, it appears that ``large enough" just means infinite.  For the automorphism group of a graph, we have the following conjecture (see \cite{tucker:distinguishing_maps}).

\begin{InfiniteMotionConjecture}  If $\Gamma$ is a connected, locally finite, denumerable  graph whose automorphism group has infinite motion, then $\Gamma$ has distinguishing number 2.
\end{InfiniteMotionConjecture}

There is a direct connection between the DSC and infinite motion.

\begin{thm} \label{DSL_implie_infinite_motion} If a connected graph $\Gamma$ satisfies the Distinct Spheres Condition, then $\aut(\Gamma)$ has infinite motion.
\end{thm}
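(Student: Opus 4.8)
The plan is to prove the contrapositive: assuming $\aut(\Gamma)$ contains a non-identity automorphism $g$ of finite support, I will produce two distinct vertices equidistant from the DSC center $\alpha$ whose spheres eventually coincide, contradicting the Distinct Spheres Condition. Write $F$ for the support of $g$; it is finite, non-empty, $g$-invariant, and $g$ fixes every vertex of $V\Gamma\setminus F$ pointwise. The single observation driving everything is that $g$ preserves distance: for any moved vertex $\gamma$, putting $\delta=\gamma^g\neq\gamma$, every \emph{fixed} vertex $w\notin F$ satisfies $d(\gamma,w)=d(\gamma^g,w^g)=d(\delta,w)$. Thus $\gamma$ and $\delta$ are equidistant from every vertex lying outside the finite set $F$.

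First I would convert this into a statement about spheres. Since $F$ is finite, there is an $n_0$ beyond which both $S(\gamma,n)$ and $S(\delta,n)$ are disjoint from $F$; for such $n$ the equidistance just noted gives $S(\gamma,n)=S(\delta,n)$. Hence the distinct pair $\{\gamma,\delta\}$ satisfies $S(\gamma,n)=S(\delta,n)$ for all but finitely many $n$. If it happens that $\alpha\notin F$, that is, $g$ fixes $\alpha$, then taking $w=\alpha$ above shows $d(\alpha,\gamma)=d(\alpha,\delta)$, so $\{\gamma,\delta\}$ is a pair equidistant from $\alpha$ whose spheres agree for all large $n$; this directly violates the DSC, forcing $g$ to have no moved vertex and hence $g=1$.

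The crux is therefore the case in which $g$ moves $\alpha$. Here I would analyze the $g$-orbit $O=\{\alpha,\alpha^g,\dots,\alpha^{g^{m-1}}\}$ of $\alpha$, of length $m\geq 2$. Chaining the equidistance observation along consecutive (moved) pairs shows that all elements of $O$ are equidistant from every fixed vertex, so any two of them have equal spheres for all large $n$ exactly as above. Moreover, because $g$ is an automorphism, $d(\alpha^{g^i},\alpha^{g^j})=d(\alpha^{g^{i+1}},\alpha^{g^{j+1}})$, so the intra-orbit distances are ``circulant'' and depend only on the cyclic gap; in particular $d(\alpha,\alpha^g)=d(\alpha,\alpha^{g^{-1}})$. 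Thus $\alpha^g$ and $\alpha^{g^{-1}}$ are equidistant from $\alpha$ and have coinciding large spheres, contradicting the DSC whenever they are distinct, i.e.\ whenever $m\geq 3$.

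What remains, and what I expect to be the genuine obstacle, is the case in which $\alpha$ sits in a $g$-orbit of length exactly $2$ (with no other usable orbit): the ``transposition'' situation $\alpha^g=\beta$, $\beta^g=\alpha$. Here the only moved pair available from $\alpha$'s orbit is $\{\alpha,\beta\}$ itself, which is emphatically \emph{not} equidistant from $\alpha$ (the two distances are $0$ and $d(\alpha,\beta)>0$), and the circulant trick degenerates. This is where the argument must work hardest: one must either manufacture an equidistant-from-$\alpha$ pair with eventually-equal spheres out of the global geometry, exploiting connectivity and the infinite diameter that the DSC guarantees, or else lean on whatever additional regularity the ambient graph carries. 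I would treat this subcase last and most carefully, since the preceding cases are essentially formal consequences of the distance-preservation observation.
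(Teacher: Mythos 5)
Your first two cases are sound: when $g$ fixes the DSC center $\alpha$, any moved pair $(\gamma,\gamma^g)$ is equidistant from $\alpha$ and has eventually equal spheres, contradicting the DSC; and when $\alpha$ lies in a $g$-orbit of length at least $3$, the pair $(\alpha^g,\alpha^{g^{-1}})$ does the job. (The reduction you leave implicit is also easy: if the orbit of $\alpha$ has length $2$ but $g^2\neq 1$, then $g^2$ is a non-identity automorphism of finite support fixing $\alpha$, so your first case applies to $g^2$.) But the transposition case you leave open is a genuine gap, and --- this is the important point --- it \emph{cannot} be closed under the literal reading of the DSC that you are (correctly) using, because with that reading the theorem is false. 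Consider the tree with vertices $\alpha,\beta,r_0,r_1,r_2,\dots$ and edges $\{\alpha,r_0\}$, $\{\beta,r_0\}$, and $\{r_i,r_{i+1}\}$ for $i\geq 0$: two leaves attached to the endpoint of a ray. Taking $\alpha$ as the center, the \emph{only} pair of distinct vertices equidistant from $\alpha$ is $\{\beta,r_1\}$ (both at distance $2$), and $S(\beta,n)=\{r_{n-1}\}\neq\{r_{n+1}\}=S(r_1,n)$ for all $n\geq 3$, so the condition ``there exists a vertex $\alpha$ such that \dots'' holds. Yet the leaf swap $(\alpha\;\beta)$ is an automorphism of motion $2$, exactly the situation of your last case. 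So no amount of ``working hardest'' on that subcase can succeed.

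It is worth seeing how the paper gets around this. Its proof is direct: if $g$ fixes no vertex its support is all of $V\Gamma$, which is infinite; if $g$ fixes some vertex $\gamma$, then for any moved $\alpha$ the pair $(\alpha,\alpha^g)$ is equidistant from $\gamma$, the automorphism $g$ fixes no vertex of $S(\alpha,n)\,\Delta\,S(\alpha^g,n)$, and this set is declared nonempty for infinitely many $n$ ``by the DSC'' --- that is, the DSC is invoked with the \emph{arbitrary fixed vertex} $\gamma$ playing the role of the center. Under the literal ``there exists a center'' definition this step is unjustified, and my example above shows the flaw is real; the step is valid only if the sphere condition holds with every vertex (equivalently, for every pair of distinct vertices, no equidistance hypothesis) as center. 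That stronger, center-free property is in fact what every application in the paper verifies: Lemma~\ref{lemma:prim_sphere_difference}, Theorem~\ref{thm:connectivity_one}, and Theorem~\ref{thm_cart_prod} all prove $S(\gamma,n)\neq S(\delta,n)$ for \emph{all} distinct pairs, never using the center. With that reading, your entire case analysis collapses to the paper's two-line argument and your hard case evaporates. So your proposal is incomplete as written, but the obstruction you isolated is precisely the point at which the theorem needs the stronger form of the DSC --- a point the paper's own proof passes over silently.
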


\begin{proof}  Suppose $\Gamma$ satisfies the DSC, and let $g\in\aut(\Gamma)$. If $g$ fixes no vertex in $\Gamma$, then $m(g)$ is infinite.  Suppose rather that $g$ fixes some $\gamma \in V\Gamma$.  If $\alpha\in V\Gamma$ and $\beta=\alpha^g$ (which means that $\alpha$ and $\beta$ are equidistant from $\gamma$), then for all $n\in\N$, the automorphism $g$ fixes no vertex in the set $S(\alpha, n) \Delta S(\beta, n)$.  This set is nonempty for infinitely many $n\in\N$ by the DSC.   Hence $g$ has infinite support, and again $m(g)$ is infinite.
\end{proof}

A consequence of Theorem \ref{DSL_implie_infinite_motion} is that all the graphs considered in this paper, when restricted to the infinite but locally finite case, have infinite motion.  The list of such graphs includes primitive graphs, vertex-transitive graphs of connectivity $1$, and Cartesian products. Thus if the Infinite Motion Conjecture is true, the 2-distinguishability of these infinite, locally finite graphs would follow directly.

%
%

\section{A rather unnecessary example}

We have shown that every connected denumerable graph satisfying the Distinct Spheres Condition is $2$-distinguishable.  We present a family of examples demonstrating that, even for vertex-transitive graphs, this condition is not necessary. These examples also have infinite motion.  Thus, infinite motion does not imply the DSC.

Let $k$ be any odd integer such that $k\geq7$.  The $k$-cycle $C_k$ is obviously connected and vertex-transitive.  Since $k\geq7$, it is 2-distinguishable \cite{AC1}.  Since $k$ is odd, for any integer $n\geq k$ and all $\zeta,\eta\in VC_k$, there exists a walk of length $n$ joining $\zeta$ and $\eta$.  (Recall that in a walk, as opposed to a path, backtracking is permitted.).  Let $T$ denote the $r$-valent tree where $r\geq2$.   We know $T$  to be vertex-transitive, 2-distinguishable, and of infinite diameter.

Our desired example $\Gamma$ is the {\em direct product} (also known as the {\em weak product} or {\em categorical product}) $T\otimes C_k$. Thus $V\Gamma=VT\times VC_k$ and $\{(\alpha,\zeta),(\beta,\eta)\}$ is an edge of $\Gamma$ if and only if both $\{\alpha,\beta\}$ is an edge of $T$ and $\{\zeta,\eta\}$ is an edge of $C_k$.  Since $k$ is odd, $\Gamma$ is connected (by Theorem 1 in  \cite{Weichsel}). By \cite[pp.\,177--178]{IK}, if $g\in\aut{\Gamma}$, then there exist $g_1\in\aut(T)$ and $g_2\in\aut(C_k)$ such that 
\begin{equation*}
(\alpha,\zeta)^g=(\alpha^{g_1},\zeta^{g_2})\quad{\text{for all}}\ (\alpha,\zeta)\in V\Gamma.
\end{equation*}  
Since $T$ and $C_k$ are vertex-transitive, so is $\Gamma$.  Furthermore, every automorphism of $\Gamma$ induces a permutation of the set
\begin{equation}\label{level}
\{H_\alpha :   \alpha \in VT\},
\end{equation}
where $H_\alpha = \{(\alpha,\zeta) :   \zeta\in VC_k\}$.

We claim that the graph $\Gamma$ fails to satisfy the Distinct Spheres Condition.  Let $\{\alpha,\beta\}$ be an edge of $T$, and let vertices $\eta_1$ and $\eta_2$ be neighbors of $\zeta$ in $C_k$.  Then in $\Gamma$, the vertices $(\beta,\eta_1)$ and $(\beta,\eta_2)$ belong to $S\big((\alpha,\zeta),1\big)$.  We leave it to the reader to verify that for all $n\geq k$, the $n$-spheres of $(\beta,\eta_1)$ and $(\beta,\eta_2)$ are identical; specifically, they are of the form $\{(\gamma,\theta) :   \theta\in VC_k; d_T(\beta,\gamma)=n\}$.

It remains only to show that, despite the failure by $\Gamma$ just described, $\Gamma$ is nonetheless 2-distinguishable.  By our assumptions, $T$ admits a distinguishing 2-coloring $f:VT\rightarrow\{c_1,c_2\}$, while $C_k$ admits a distinguishing 2-coloring $h:VC_k\rightarrow\{c_1,c_2\}$.  Since $|VC_k|$ is odd, no automorphism of $C_k$ interchanges $h^{-1}(c_1)$ with $h^{-1}(c_2)$.  The following function $F:V\Gamma\rightarrow\{c_1,c_2\}$ is a distinguishing 2-coloring of $\Gamma$:
\begin{eqnarray*}
F(\alpha,\zeta)&=&c_1\quad{\text{if}}\quad f(\alpha)=h(\zeta);\\
F(\alpha,\zeta)&=&c_2\quad{\text{if}}\quad f(\alpha)\neq h(\zeta).  
\end{eqnarray*}
One easily sees that the 2-coloring determined by $F$ is preserved by no automorphism that permutes the sets in Equation (\ref{level}).   By the above remarks these are the only automorphisms of $\Gamma$.

Finally, we show that $\Gamma$ has infinite motion.  Since the tree $T$ has infinite motion, we only need to show that any automorphism of $\Gamma$ that fixes setwise each set in (\ref{level}) moves infinitely many vertices.  But any such automorphism $g$ moving a vertex of the form $(\alpha, \zeta)$ must also move some vertex of the form $(\beta, \eta)$ for each $\beta$ adjacent to $\alpha$ in $T$. Continuing to the neighbors of $\beta$ and onward to its neighbors, we see that  for each vertex $\beta\in VT$, the automorphism $g$ must move at least one vertex in $H_\beta$.  Thus the automorphism has infinite motion.

%
%

\end{document}